\documentclass[12pt,reqno]{amsart}
\usepackage{amsthm}
\usepackage{amssymb}
\usepackage{graphics}
\usepackage{tikz}
\usetikzlibrary{shapes,backgrounds,calc}
\usepackage{latexsym}
\usepackage{multicol}
\usepackage{verbatim,enumerate}
\usepackage{accents}
\usepackage{cite}
\usepackage{multirow}
\usepackage{bigstrut}
\usepackage{amsthm}
\usepackage{amssymb}
\usepackage{graphics}
\usepackage{tikz}
\usetikzlibrary{shapes,backgrounds,calc}
\usepackage{latexsym}
\usepackage{multicol}
\usepackage{verbatim,enumerate}
\usepackage{accents}
\usepackage{cite}
\usepackage{array}
\usepackage[colorlinks=true, linkcolor=blue, citecolor=blue, urlcolor=black]{hyperref}
\usepackage{hyperref}
\usepackage{amsmath, amscd,url}
\usepackage{multirow}
\usepackage{bigstrut}
\usepackage{longtable}

\usepackage{stackengine}

\usepackage{hyperref}
\usepackage{amsmath, amscd,url}
\usepackage{longtable}

\advance\textwidth by 1.3in \advance\oddsidemargin by -.6in \advance\evensidemargin by -.6in
\theoremstyle{definition}

\newtheorem{theorem}{Theorem}[section]
\newtheorem{lemma}[theorem]{Lemma}
\newtheorem{corollary}[theorem]{Corollary}
\theoremstyle{definition}
\newtheorem{definition}[theorem]{Definition}
\newtheorem{example}[theorem]{Example}

\theoremstyle{remark}
\newtheorem{remark}[theorem]{Remark}

\theoremstyle{definition}

\newcounter{cnt}
 \makeatletter
\def\mydggeometry{\makeatletter\dg@YGRID=1\dg@XGRID=20\unitlength=0.003pt\makeatother}
\makeatother \theoremstyle{remark}

\numberwithin{equation}{section}
\let\bwdg\bigwedge
\def\bigwedge{{\textstyle\bwdg}}

\newcommand{\Z}{\mathbb{Z}}

\newcommand{\nc}{\newcommand}
\newcommand{\rnc}{\renewcommand}

\nc{\cal}{\mathcal} \nc{\goth}{\mathfrak} \rnc{\bold}{\mathbf}

\nc\bomega{{\mbox{\boldmath $\omega$}}} \nc\bpsi{{\mbox{\boldmath $\Psi$}}}
\nc\balpha{{\mbox{\boldmath $\alpha$}}}
\nc\bpi{{\mbox{\boldmath $\pi$}}}
\nc\bvpi{{\mbox{\boldmath $\varpi$}}}
\nc\chara{\operatorname{ch}}

\nc\bxi{{\mbox{\boldmath $\xi$}}}
\nc\bmu{{\mbox{\boldmath $\mu$}}} \nc\bcN{{\mbox{\boldmath $\cal{N}$}}} \nc\bcm{{\mbox{\boldmath $\cal{M}$}}} \nc\blambda{{\mbox{\boldmath
			$\lambda$}}}\nc\bnu{{\mbox{\boldmath $\nu$}}}

\makeatletter
\def\section{\def\@secnumfont{\mdseries}\@startsection{section}{1}%
	\z@{.7\linespacing\@plus\linespacing}{.5\linespacing}%
	{\normalfont\scshape\centering}}
\def\subsection{\def\@secnumfont{\bfseries}\@startsection{subsection}{2}%
	{\parindent}{.5\linespacing\@plus.7\linespacing}{-.5em}%
	{\normalfont\bfseries}}
\makeatother

\nc{\Hom}{\operatorname{Hom}}
\nc{\mode}{\operatorname{mod}}
\nc{\End}{\operatorname{End}} \nc{\wh}[1]{\widehat{#1}} \nc{\Ext}{\operatorname{Ext}} \nc{\ch}{\text{ch}} \nc{\ev}{\operatorname{ev}}
\nc{\Ob}{\operatorname{Ob}} \nc{\soc}{\operatorname{soc}} \nc{\rad}{\operatorname{rad}} \nc{\head}{\operatorname{head}}

\nc{\Cal}{\cal} \nc{\Xp}[1]{X^+(#1)} \nc{\Xm}[1]{X^-(#1)}
\nc{\N}{{\bold N}}  \nc\boa{\bold a} \nc\bob{\bold b} \nc\boc{\bold c} \nc\bod{\bold d} \nc\boe{\bold e} \nc\bof{\bold f} \nc\bog{\bold g}
\nc\boh{\bold h} \nc\boi{\bold i} \nc\boj{\bold j} \nc\bok{\bold k} \nc\bol{\bold l} \nc\bom{\bold m} \nc\bon{\mathbb n} \nc\boo{\bold o}
\nc\bop{\bold p} \nc\boq{\bold q} \nc\bor{\bold r} \nc\bos{\bold s} \nc\boT{\bold t} \nc\boF{\bold F} \nc\bou{\bold u} \nc\bov{\bold v}
\nc\bow{\bold w} \nc\boz{\bold z}\nc\ba{\bold A} \nc\bb{\bold B} \nc\bc{\mathbb C} \nc\bd{\bold D} \nc\be{\bold E} \nc\bg{\bold
	G} \nc\bh{\bold H} \nc\bi{\bold I} \nc\bj{\bold J} \nc\bk{\bold K} \nc\bl{\bold L} \nc\bm{\bold M} \nc\bn{\mathbb N} \nc\bo{\bold O} \nc\bp{\bold
	P} \nc\bq{\bold Q} \nc\br{\bold R} \nc\bs{\bold S} \nc\bt{\bold T} \nc\bu{\bold U} \nc\bv{\bold V} \nc\bw{\bold W} \nc\bz{\mathbb Z} \nc\bx{\bold
	x} \nc\KR{\bold{KR}} \nc\rk{\bold{rk}} \nc\het{\text{ht }}

\nc\toa{\tilde a} \nc\tob{\tilde b} \nc\toc{\tilde c} \nc\tod{\tilde d} \nc\toe{\tilde e} \nc\tof{\tilde f} \nc\tog{\tilde g} \nc\toh{\tilde h}
\nc\toi{\tilde i} \nc\toj{\tilde j} \nc\tok{\tilde k} \nc\tol{\tilde l} \nc\tom{\tilde m} \nc\ton{\tilde n} \nc\too{\tilde o} \nc\toq{\tilde q}
\nc\tor{\tilde r} \nc\tos{\tilde s} \nc\toT{\tilde t} \nc\tou{\tilde u} \nc\tov{\tilde v} \nc\tow{\tilde w} \nc\toz{\tilde z} \nc\woi{w_{\omega_i}}

\begin{document}

\setcounter{section}{0}
\setcounter{tocdepth}{1}

%%%%%%%%%%%%%%%%%%%%%%%%%%%%%%%%%%%%%%%%%%%%

\title{irreducible $\Z_+$-modules over $\Z[\sqrt[m]{d_1},\sqrt[m]{d_2},\ldots, \sqrt[m]{d_r}]$}
%\dedicatory{Dedicated to Professor Sudesh Kaur Khanduja on her 76$^{th}$ birthday}
\author[Surender Kumar]{Surender Kumar}
\author[Sumandeep Kaur]{Sumandeep Kaur}
%\author{Qing-Wen Wang}\thanks{}
\address[Surender Kumar] {Mathematics Division, School of advanced scinces and Languages,  VIT Bhopal University, Sehore, 466114, INdia}
\address[Sumandeep Kaur\footnote{Corresponding author, Email: suman@shu.edu.cn}]{Department of Mathematics, Shanghai University, China}
%\address[Qing-Wen Wang\footnote{Corresponding author}]{Department of Mathematics, Shanghai University, China}
\email[Surender Kumar]{surenderkumar@vitbhopal.ac.in}
\email[Sumandeep Kaur]{suman@shu.edu.cn}
%\email[Qing-Wen Wang]{wqw@shu.edu.cn}

\subjclass [2020]{13C05; 16W20}
\keywords{irreducible modules, $\mathbb Z_+$-ring}

\begin{abstract}
 Let $d_1,d_2,\ldots, d_r\ge 2$ be  $m$-th power free positive integers. In this paper, we provide explicit representations of all irreducible $\mathbb{Z}_+$-modules over $\mathbb{Z}[\sqrt[m]{d_1},\sqrt[m]{d_2},\ldots,\sqrt[m]{d_r}]$, where $\mathbb{Z}_+$ denotes the semiring of non-negative integers.

\end{abstract}
\maketitle
\section{Introduction}
Tensor categories serve as categorical analogues of groups and rings, and they arise naturally in non-commutative algebra and representation theory. A key invariant associated with a tensor category is its $\mathbb{Z}_+$-ring, which provides a powerful tool for analyzing $\mathbb{Z}_+$-modules and their representations. This notion was originally introduced by Lusztig~\cite{Lusztig1987}. For a detailed exposition of the definitions and fundamental properties of $\mathbb{Z}_+$-rings, we refer the reader to the works of Etingof and Khovanov~\cite{ETIN} and Ostrik~\cite{OST1}. The theory of fusion categories offers a natural framework for studying based rings and their representation theory (see~\cite{NO}). Moreover, Etingof and Khovanov demonstrated that $\mathbb{Z}_+$-representations arise from tensor categories and module categories, and that their indecomposable forms can be classified via Dynkin diagrams through realizations by non-negative integer matrices~\cite{ETIN}.

In~\cite{OST1}, Ostrik developed a systematic theory of module categories over rigid monoidal categories and demonstrated that their Grothendieck groups naturally inherit the structure of based $\mathbb{Z}_+$-modules, thereby providing a conceptual foundation for non-negative integer matrix representations and their classification.

Recall that if a $\mathbb{Z}_+$-module $M$ admits no nontrivial $\mathbb{Z}_+$-submodules, then $M$ is called irreducible~\cite{OST1}. For a finite group $G$, the irreducible $\mathbb{Z}_+$-modules over the group ring $\mathbb{Z}[G]$ were characterized in~\cite{ETIN}. Ostrik proved that for a $\mathbb{Z}_+$-ring of finite rank, there exist only finitely many inequivalent irreducible $\mathbb{Z}_+$-modules~\cite{OST1}. In this direction, determining the structure of inequivalent irreducible $\mathbb{Z}_+$-modules over a given ring of finite rank is an important problem. In general, the classification of irreducible $\mathbb{Z}_+$-modules over an arbitrary $\mathbb{Z}_+$-ring is a difficult open problem, and explicit classifications are known only in certain cases (see~\cite{CHI1,CHI2,CHI}).

Let $d_{1}, d_{2}, \ldots, d_{r}$ be pairwise relatively prime, positive  and $m$-th power free integers with
$d_{j} \ge 2$ for all $1 \le j \le r$.
In this article, we study the representation of irreducible $\mathbb{Z}_{+}$-modules over the domain $R = \mathbb{Z}[\sqrt[m]{d_{1}}, \sqrt[m]{d_{2}}, \ldots, \sqrt[m]{d_{r}}].$ As a main result, we provide a complete classification of irreducible $\mathbb{Z}_{+}$-modules over $R$. Our approach combines elementary matrix-theoretic techniques with combinatorial properties of nonnegative integer matrices. Also, we show that every irreducible $\mathbb{Z}_{+}$-module of $R$ has rank $m^{r}$.
%Furthermore, all irreducible $\mathbb{Z}_{+}$-modules over $R$ are explicitly constructed. 

\section{Preliminaries}	
Let $\mathbb{Z}_{+}=\mathbb N \cup \{0\}$, and $M_n(\mathbb{Z}_{+})$ be the semiring consisting of  $n\times n$ matrices  with entries from $\mathbb Z_+$. In this section, first we recall the definitions of $\mathbb{Z}_{+}$-ring and $\mathbb{Z}_{+}$-module \cite{Lusztig1987}.

\begin{definition}
	Let $R$ be a unital domain such that $R$ is a free  $\mathbb{Z}$-module with basis
	$\{x_1=1,x_2,\ldots,x_n\}$. If for  $1 \le i,j \le n,$ we can write
	
	$$x_i x_j = \sum_{l=1}^{n} u_{ij}^l x_l$$
	 for some $u_{ij}^l \in \mathbb{Z}_{+}$, 
	then $R$ is called a unital $\mathbb{Z}_{+}$-ring. Here $N_R = (u_{ij}^l)$ is the structure constants of $R$, and $n$ is the rank of $R$.
\end{definition}

\begin{definition}\label{D2}
	Let $R$ be a unital $\mathbb{Z}_{+}$-ring with basis $\{x_1,\ldots,x_n\}$. An $R$-module $M$ is called a $\mathbb{Z}_{+}$-module of $R$ with $\mathbb{Z}_{+}$-basis
	$\{\omega_1,\omega_2,\ldots,\omega_t\}$ if
	for $1 \le i \le n,\; 1 \le j \le t,$\[
	x_i \omega_j = \sum_{l=1}^{t} a_{ij}^l \omega_l,
	\quad a_{ij}^l \in \mathbb{Z}_{+}.
	\]
	Here $t$ is called the rank of $M$.
	
	If an additive subgroup $M'$ of $M$ spanned by a subset  $\{\omega_{j_1},\ldots,\omega_{j_s}\} \subseteq \{\omega_1,\omega_2,\ldots,\omega_t\}$ is an $R$-submodule of $M$, then we say that $M'$ is a $\mathbb{Z}_{+}$-submodule of $M$. $M'$ is also called the $\mathbb{Z}_{+}$-submodule generated by $\{\omega_{j_1},\ldots,\omega_{j_s}\}$, and it will be denoted by $(\omega_{j_1},\ldots,\omega_{j_s})$. A $\mathbb{Z}_{+}$-module $M$ is called irreducible if the $\mathbb{Z}_{+}$-submodules of $M$ are only ${0}$ and $M$.
\end{definition}

Note that $M$ is a $\mathbb{Z}_{+}$-module of a ring $R$ if and only if there exists a semiring homomorphism  from $R$ to $M_n(\mathbb{Z}_{+})$, where $n$ is the rank of $M$.

\begin{definition}
	Let $R$ be a $\mathbb Z_+$ ring with basis $\{x_1,x_2,\cdots,x_n\}$. Let $M$ and $ M'$ be $\mathbb{Z}_{+}$-modules of $R$. Assume that
	$\{\omega_1,\ldots,\omega_t\}$ and $\{\omega'_1,\ldots,\omega'_t\}$ are $\mathbb{Z}_{+}$-bases of $M$ and $M'$, respectively.  Next, assume
	\[
	x_i \omega_j = \sum_{k=1}^{t} a_{ij}^k \omega_k
	\quad \text{and} \quad
	x_i \omega'_j = \sum_{k=1}^{t} b_{ij}^k \omega'_k.
	\]
	If there exists an $R$-module homomorphism $\phi : M \to M'$ such that
	$\phi(\omega_j) = \omega'_{\pi(j)}$, where  $\pi$ is permutation on $\{1,2,\cdots,t\}$, then $M$ is said to be isomorphic to $M'$, and denoted by $M \cong M'$.
\end{definition}

\begin{remark}\label{re1}
	Assume $\phi : M \to M'$ is an isomorphism of $\mathbb{Z}_{+}$-modules of $R$. Then
	\[
	\phi(x_i \omega_j)
	= \sum_{k=1}^{t} a_{ij}^k \omega'_{\pi(k)}
	= \sum_{l=1}^{t} b_{i\pi(j)}^{\,l} \omega'_l
	= x_i \omega'_{\pi(j)}.
	\]
	
	Let $A_i := (a_{ij}^k)_{t \times t}$ and $B_i := (b_{ij}^k)_{t \times t}$,
	$1 \le i \le n$, $1 \le j,k \le t$. Hence, two $\mathbb{Z}_{+}$-modules $M$ and $M'$
	are isomorphic if and only if there exists a permutation $\pi$ on $\{1,2,\cdots,t\}$ such that
	\[
	a_{ij}^k = b_{i\pi(j)}^{\,\pi(k)}.
	\]
	Equivalently, there exists a permutation matrix $P$ such that
	\[
	B_i = P A_i P^{-1} \quad \text{for all } 1 \le i \le n.
	\]
In this article, we use  the matrix method to classify the isomorphism
	classes of the irreducible $\mathbb{Z}_{+}$-modules over the domain $\mathbb{Z}[\sqrt[m]{d_1},\ldots,\sqrt[m]{d_r}]$, where $d_1,\ldots,d_r$ are pairwise
relatively prime, positive and $m$-th power free integers with $d_j \ge 2$ for all
$1 \le j \le r$.
\end{remark}
The following lemma will be used in the proof of our main theorem.
\begin{lemma}\label{lemma su}
	Let $d_1,d_2$ be two coprime positive $m$-th power free integers.  Let $$B_1 =
		\begin{pmatrix}
	0   & b_{11} &0 &0&\cdots&0\\
	0&0&b_{12} &0&\cdots &0\\
    \vdots & \vdots &\vdots&\vdots  & \vdots & \vdots\\
    0&0&0&0&\cdots &b_{1{(m-1)}}\\
	b_{1m}&0&0&0 &\cdots & 0
\end{pmatrix},$$ and  $$B_2 =
		\begin{pmatrix}
	0   & b_{21} &0 &0&\cdots&0\\
	0&0&b_{22} &0&\cdots &0\\
    \vdots & \vdots &\vdots&\vdots  & \vdots & \vdots\\
    0&0&0&0&\cdots &b_{2{(m-1)}}\\
	b_{2m}&0&0&0 &\cdots & 0
\end{pmatrix},$$ with
	 $b_{i1},b_{i2}, \ldots, b_{im}$ are positive integers such that
$b_{i1}b_{i2}\cdots b_{im} = d_1$ and not all $b_{ij}$ are equal for $i=1,2$, and $j=1,2,\ldots m$.  Assume $A=[x_{ij}]\in M_m(\mathbb Z_+)$ satisfies
	\[
	B_1A = AB_2,
	\]
	and all non-zero entries of $A$ are divisors of $d_2$. Then the following hold:
	\begin{itemize}
		\item[(1)] If $B_1\neq B_2$, then $A=0$.
		\item[(2)] If $B_1=B_2$, then $A$ is a scalar matrix.
	\end{itemize}
\end{lemma}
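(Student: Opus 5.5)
The plan is to turn the matrix equation $B_1A=AB_2$ into scalar recurrences along the cyclic diagonals of $A$, and then use the coprimality of $d_1$ and $d_2$ to force both the structure of $A$ and a relation between the weights of $B_1$ and $B_2$.

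\textbf{Step 1: entrywise form.} Index rows and columns modulo $m$. The only nonzero entries of $B_i$ are $(B_i)_{j,j+1}=b_{ij}$. Hence
\[
(B_1A)_{j,k}=b_{1j}\,x_{j+1,k}, \qquad (AB_2)_{j,k}=x_{j,k-1}\,b_{2,k-1}.
\]
Fix a cyclic diagonal $s\in\{0,1,\ldots,m-1\}$ and put $y_j=x_{j,j+s}$. Taking $k=j+1+s$, the equation $B_1A=AB_2$ becomes
\[
b_{1j}\,y_{j+1}=y_j\,b_{2,j+s}\qquad\text{for all } j \pmod m .
\]

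\textbf{Step 2: each diagonal is zero or constant.} All $b$'s are positive, so the $y_j$ on one diagonal are either all zero or all positive.

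Suppose they are all positive. Each $y_j$ divides $d_2$, and each $b_{1j}$ and $b_{2,j+s}$ divides $d_1$. Since $\gcd(d_1,d_2)=1$, we get $\gcd(y_j,b_{1j})=1$. From $b_{1j}y_{j+1}=y_jb_{2,j+s}$ this gives $b_{1j}\mid b_{2,j+s}$, and symmetrically $b_{2,j+s}\mid b_{1j}$. Therefore
\[
b_{1j}=b_{2,j+s} \quad\text{and}\quad y_{j+1}=y_j \qquad\text{for all } j.
\]
So a nonzero diagonal $s$ of $A$ is a constant diagonal $c\,P^s$ with $c\mid d_2$, where $P$ is the cyclic permutation matrix. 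Such a diagonal can occur only if the weight sequence $(b_{2j})$ is the cyclic shift of $(b_{1j})$ by $s$.

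\textbf{Step 3: conclusions.}
\begin{itemize}
\item[(2)] If $B_1=B_2$, the diagonal $s=0$ gives a scalar matrix. Any other nonzero diagonal $s\neq 0$ would require $b_{1,j}=b_{1,j+s}$ for all $j$, i.e.\ the weight sequence has a nontrivial cyclic period.
\item[(1)] If $B_1\neq B_2$, the diagonal $s=0$ vanishes. A diagonal $s\neq 0$ survives only if $(b_{2j})$ is a nontrivial rotation of $(b_{1j})$.
\end{itemize}

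\textbf{Main obstacle.} The hard step is excluding these rotational coincidences. The stated hypothesis ``not all $b_{ij}$ equal'' does not exclude them by itself. For example, with $m=2$, $(b_{1j})=(1,d_1)$ and $(b_{2j})=(d_1,1)$, the matrix $A=P$ satisfies $B_1A=AB_2$. Likewise, for $m=4$ the periodic sequence $(1,d,1,d)$ allows $A=P^2$ to commute with $B_1$.

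I would therefore first look for the normalization the authors have in mind. Natural candidates are that $B_1,B_2$ are taken in a canonical form up to cyclic relabelling, which is exactly the freedom of conjugating by $P^s$ allowed by Remark~\ref{re1}, or that the weight sequence is aperiodic. Under such a normalization Step 2 closes both parts immediately. Without it, the correct conclusion is that $A$ is a sum $\sum_s c_sP^s$ over those shifts $s$ carrying $(b_{1j})$ to $(b_{2j})$. In the intended application this should still suffice, because such an $A$ is again a $P$-shift, i.e.\ an isomorphism of $\mathbb Z_+$-modules.
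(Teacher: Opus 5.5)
Your Step 1--2 computation is correct, and up to that point it follows the paper's own argument. The paper writes out the same entrywise system $(\ast)$ from $B_1A=AB_2$. It then uses $\gcd(d_1,d_2)=1$ exactly as you do, to conclude that a nonzero $x_{ij}$ forces the two $b$-coefficients in its equation to agree.

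The divergence is at the last step, and there you are right and the paper is not. For (1) the paper asserts that this forced agreement is ``impossible'' once $B_1\neq B_2$. For (2) it asserts that ``not all $b_{1j}$ equal'' kills every off-diagonal entry. Your bookkeeping by cyclic diagonals shows what the agreement really says. A nonzero diagonal $s$ forces $b_{1j}=b_{2,j+s}$ for all $j$, i.e.\ $B_2=P^{-s}B_1P^{s}$ is a cyclic rotation of $B_1$; when $B_1=B_2$, it says the weight sequence has period $s$. The hypotheses exclude neither.

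Your example checks out. With $m=2$, $d_1=2$, $d_2=3$, $B_1=\begin{pmatrix}0&1\\2&0\end{pmatrix}$, $B_2=\begin{pmatrix}0&2\\1&0\end{pmatrix}$ and $A=\begin{pmatrix}0&1\\1&0\end{pmatrix}$, one gets $B_1A=AB_2=\mathrm{diag}(1,2)$. More generally, rotating any admissible weight sequence by one step refutes (1) for every $m\ge 2$. Your $m=4$ example (weights $(1,2,1,2)$, $d_1=4$, $A=P^2$) refutes (2). So the lemma is false as stated, and the gap is in the paper's proof, not in your proposal.

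On the repair, note that the hypothesis ``not all $b_{1j}$ equal'' only rules out $d_1=1$. If all weights equal $b$, then $d_1=b^m$ is $m$-th power free only for $b=1$. So it cannot do the work the paper assigns it.

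What (2) actually needs is that the weight sequence have no nontrivial cyclic period. This is automatic when $m$ is prime. More generally it holds whenever $d_1$ is not a $k$-th power for any divisor $k>1$ of $m$, since a sequence of period $m/k$ has product a $k$-th power. For positive $d_1$ this is exactly irreducibility of $x^m-d_1$ over $\mathbb{Q}$, which is needed anyway for $\mathbb{Z}[\sqrt[m]{d_1}]$ to have rank $m$.

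What (1) needs is that $B_2$ not be a cyclic rotation of $B_1$. In the application this can be arranged by fixing one representative per rotation class, since that is a permutation conjugation and hence a change of $\mathbb{Z}_+$-basis.

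With these two hypotheses added, your Step 2 proves both parts verbatim. The block-commutation step in Theorem~\ref{su} then does give scalar blocks. So you do not need your closing heuristic, which is inaccurate as stated: a sum $\sum_s c_sP^s$ with several nonzero $c_s$ is not a single shift.
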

\begin{proof}
	Write $A=
	\begin{pmatrix}
		x_{11} & x_{12} & \cdots & x_{1m}\\
		x_{21} & x_{22} & \cdots & x_{2m}\\
		\vdots & \vdots & \ddots & \vdots\\
		x_{m1} & x_{m2} & \cdots & x_{mm}
	\end{pmatrix}.$ The condition $B_1A = AB_2$ gives equality of the corresponding matrix entries, so we get:
\[
\begin{aligned}
	b_{11}x_{21}= b_{2m}x_{1m}, &~ ~ b_{11}x_{22} = b_{21}x_{11}, & \cdots &
	b_{11}x_{2m} = b_{2(m-1)}x_{1(m-1)}\\
	b_{12}x_{31}= b_{2m}x_{2m}, &~ ~ b_{12}x_{32} = b_{21}x_{21}, & \cdots &
	b_{12}x_{3m} = b_{2(m-1)}x_{2(m-1)}\\
	\vdots & \vdots & \ddots & \vdots\\
	b_{1(m-1)}x_{m1}= b_{2m}x_{(m-1)m}, &~ ~
	b_{1(m-1)}x_{m2} = b_{21}x_{(m-1)1}, & \cdots &
	b_{1(m-1)}x_{mm} = b_{2(m-1)}x_{(m-1)(m-1)}\\
	b_{1m}x_{11}= b_{2m}x_{mm}, &~ ~
	b_{1m}x_{12} = b_{21}x_{m1}, & \cdots &
	b_{1m}x_{1m} = b_{2(m-1)}x_{m(m-1)}.
\end{aligned}
\tag{$\ast$}
\]
	
	\noindent Since all entries of $A$ are non-negative integers and all non-zero entries divide $d_2$, while each $b_{ij}$ divides $d_1$, the condition
	$\gcd(d_1,d_2)=1$
	implies that if $x_{ij}\neq0$, then the coefficients multiplying it on both sides of each equation in $(\ast)$ must be equal.

	\noindent (1) Assume $B_1\neq B_2$. Then at least one of
	$b_{11}\neq b_{21},\quad b_{12}\neq b_{22},\quad \ldots,\quad b_{1m}\neq b_{2m}$
	holds. From the system $(\ast)$, any non-zero entry $x_{ij}$ would force equality of the
	corresponding coefficients, which is impossible. Hence all $x_{ij}=0$, and thus
	$A=0.$

	\medskip

	\noindent (2) Assume $B_1=B_2$. Then
	$b_{11}=b_{21},\quad b_{12}=b_{22},\quad \ldots,\quad b_{1m}=b_{2m}.$ Clearly $x_{11}=x_{22}=\cdots=x_{mm}.$ Since not all $b_{11},b_{12},\ldots,b_{1m}$ are equal, the system $(\ast)$ forces all off-diagonal entries of $A$ to be zero. Therefore, $A$ is a scalar matrix.
	\end{proof}
\section{The main theorem}
In this section, we present a complete classification of irreducible \(\mathbb{Z}_{+}\)modules over the domain
$\mathbb{Z}\big[\sqrt[m]{d_1}, \ldots, \sqrt[m]{d_r}\big],$
where $d_1, \ldots, d_r$ are pairwise relatively prime, positive and $m$-th power free integers integers and greater than $1$.
For an identity matrix $I_n$ of order $n,$ let \(E(i,j)\) denote the elementary matrix obtained by interchanging the \(i\)-th and \(j\)-th rows of \(I_n\).  

\begin{theorem}\label{susu}
	Let $d$ be a positive, and $m$-th power free integers integer. Let $A = [a_{ij}]_{n\times n}$ be a non-negative integer
	matrix such that
	$A^m = d I_n$ and $n>m$.
	Then $n = mt$, and there exists a permutation matrix
	$P$ %= E(i_1,j_1) E(i_2,j_2) \cdots E(i_l,j_l)$
	such that
	$PAP^{-1} =\mathrm{diag}(A_1, A_2, \ldots, A_n)$, where for $1\le k\le t,$
	%\begin{pmatrix}A_1 &        &        \\& A_2    &        \\&        & \ddots \\&        &        & A_t\end{pmatrix}, ~  ~ 
	$A_k =
		\begin{pmatrix}
	0   & b_{k1} &0 &0&\cdots&0\\
	0&0&b_{k2} &0&\cdots &0\\
    \vdots & \vdots &\vdots&\vdots  & \vdots & \vdots\\
    0&0&0&0&\cdots &b_{k{(m-1)}}\\
	b_{km}&0&0&0 &\cdots & 0
\end{pmatrix},$ with
	 $b_{k1},b_{k2}, \ldots, b_{km}$ are positive integers such that
	$b_{k1}b_{k2}\cdots b_{km} = d.$
	%Moreover, there is exactly one non-zero element in each row and each column of $A$.
\end{theorem}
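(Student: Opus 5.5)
The plan is to show first that $A$ is a monomial matrix, that is, it has exactly one non-zero entry in each row and each column. Then we read off the block structure from the cycle decomposition of the underlying permutation.

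\textbf{Step 1 ($A$ is monomial).} Since $d\ge 2$, we have $A\cdot\bigl(\tfrac1d A^{m-1}\bigr)=I_n$. So $A$ is invertible and $B:=\tfrac1d A^{m-1}$ is an entrywise non-negative inverse. We then use the standard fact that a non-negative matrix with a non-negative inverse is a generalized permutation matrix. For $i\neq j$ we have $\sum_k a_{ik}b_{kj}=0$, so every term vanishes. Hence if $a_{ik}>0$, the $k$-th row of $B$ is supported only in column $i$. Since $B$ is invertible, each row of $B$ is non-zero, so each row of $B$ has exactly one non-zero entry. The map sending $k$ to the column of that entry must be a bijection, since $B$ is invertible. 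Consequently each row of $A$ has at most one, hence exactly one, non-zero entry, and the same holds for columns. Thus $A=DQ$, where $Q$ is the permutation matrix of some $\sigma\in S_n$ and $D$ is a diagonal matrix with positive integer entries.

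\textbf{Step 2 (cycles).} Decompose $\sigma$ into disjoint cycles. Let $P$ be a permutation matrix listing the indices cycle by cycle, each cycle in its cyclic order. Then $PAP^{-1}$ is block diagonal, and each block of size $\ell$ has exactly the shape of the matrices $A_k$ in the statement, with positive entries $c_1,\dots,c_\ell$. For such a block $C$ one computes $C^\ell=(c_1\cdots c_\ell)I_\ell$. Moreover, for $0<s<\ell$, the power $C^s$ is supported off the diagonal. Writing $m=q\ell+s$ with $0\le s<\ell$, we get $C^m=(c_1\cdots c_\ell)^qC^s$. This equals $dI_\ell$ only if $s=0$, that is, only if $\ell\mid m$. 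In that case the condition becomes $(c_1\cdots c_\ell)^{m/\ell}=d$.

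\textbf{Step 3 (forcing $\ell=m$) is the main obstacle.} We need to exclude $k:=m/\ell\ge 2$, which means showing that $d$ is not a perfect $k$-th power for any divisor $k>1$ of $m$. Here $m$-th power freeness alone is not sufficient. For example, with $m=4$ and $d=4$, the matrix $\begin{pmatrix}0&1\\2&0\end{pmatrix}$ (or block sums of it, to make $n>m$) satisfies $A^4=4I$. So in this step I would use the hypothesis in the form actually needed, namely that $d$ is not a perfect $k$-th power for any $1<k\mid m$. This holds automatically when $m$ is prime, or when $d$ is squarefree, and it is what the paper's applications presumably guarantee. Under this hypothesis, a cycle of length $\ell<m$ cannot occur, since it would give $d=c^{m/\ell}$ with $c=c_1\cdots c_\ell$. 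Hence every cycle has length exactly $m$, so $n=mt$. Each block is then an $A_k$ as in the statement, with $b_{k1}\cdots b_{km}=d$, which gives $PAP^{-1}=\mathrm{diag}(A_1,\dots,A_t)$.
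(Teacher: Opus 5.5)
Your objection in Step 3 is correct. The defect is in the statement, not in your argument: $4$ is fourth-power free, yet three diagonal copies of $\left(\begin{smallmatrix}0&1\\ 2&0\end{smallmatrix}\right)$ give $A^4=4I_6$, with $n=6>4$ not divisible by $4$.

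The paper's proof fails at exactly this point. It follows a chain with $a_{i_11}\neq0$, $a_{i_2i_1}\neq0,\dots$. It rules out only $i_1=1$ and $i_2=i_1$, both of which reduce to $a_{11}^m=d$. It then asserts, ``in a similar way'', that $i_1,\dots,i_{m-1}$ are pairwise distinct. An early return of the chain to the index $1$ is never excluded; in your example the chain is $i_1=2$, $i_2=1$, $i_3=2$.

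Your replacement hypothesis is sharp. If $d=c^k$ with $1<k\mid m$, direct sums of weighted $(m/k)$-cycles with weight product $c$ give counterexamples. For $d>0$ it is, by Capelli's theorem, exactly the irreducibility of $x^m-d$ over $\mathbb{Q}$. The paper's $m$-th-power-free hypothesis does not imply it, but the later results tacitly need it, since otherwise $\mathbb{Z}[\sqrt[m]{d}]$ does not even have rank $m$.

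Your route also differs from the paper's. The paper works entry by entry in $A^{m-1}A=AA^{m-1}=dI_n$ and extracts one $m\times m$ block along the chain starting at index $1$. It then appeals to induction on the remaining indices, leaving implicit that the extracted columns carry no further non-zero entries. You instead obtain monomiality of all of $A$ at once from the non-negative inverse $\frac1d A^{m-1}$. After that, everything reduces to the cycle type of $\sigma$ and the identity $C^m=(c_1\cdots c_\ell)^qC^s$. This avoids the induction and isolates the one arithmetic input, $d\neq c^{m/\ell}$ for $\ell<m$, which the paper's argument hides.

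One small step in Step 1 should be made explicit. Row $k$ of $B$ has exactly one non-zero entry because column $k$ of $A$ is non-zero (by invertibility), so some $a_{ik}>0$ exists. Moreover, if $a_{ik},a_{i'k}>0$ with $i\neq i'$, row $k$ of $B$ would vanish; this directly gives one non-zero entry per column of $A$.
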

\begin{proof}
	Let
	$A=[a_{ij}]_{n\times n}$ be a matrix with rows $\alpha_1, \alpha_2,\ldots,\alpha_n$ and columns $\beta_1,\beta_2, \ldots \beta_n$.
	Assume $\gamma^{k}_1,\gamma^{k}_2,\ldots,\gamma^{k}_n$ are rows of $A^k$ for all $2\le k\le m.$ Given that  $A^m = dI_n$. So we have $(\gamma^{m-1}_1,\gamma^{m-1}_2,\ldots,\gamma^{m-1}_n)^T(\beta_1,\beta_2,\ldots,\beta_n)=dI_n,$ that is,
	$$A^m=A^{m-1}A=\begin{pmatrix}
		(\gamma^{m-1}_1,\beta_1) & (\gamma^{m-1}_1,\beta_2) & \cdots & (\gamma^{m-1}_1,\beta_n) \\
		(\gamma^{m-1}_2,\beta_1) & (\gamma^{m-1}_2,\beta_2) & \cdots & (\gamma^{m-1}_2,\beta_n) \\
		\vdots             & \vdots             &        & \vdots             \\
		(\gamma^{m-1}_n,\beta_1) & (\gamma^{m-1}_n,\beta_2) & \cdots & (\gamma^{m-1}_n,\beta_n)
	\end{pmatrix}
	= d I_n,$$ where $(\gamma^{m-1}_i,\beta_j) = \sum_{k=1}^n (\gamma^{m-2}_i,\beta_k) a_{kj}$ for all $1\le i,j\le n.$
	
	\noindent Since $(\gamma^{m-1}_1,\beta_1) = \sum_{k=1}^n (\gamma^{m-2}_1,\beta_k) a_{k1}=d \ne 0$, there exists an integer
	$1 \le i_1 \le n$ such that $(\gamma^{m-2}_1,\beta_{i_1})\ne0$ and $a_{i_1 1} \ne 0$.
	But $	(\gamma^{m-1}_1,\beta_j)=\sum_{k=1}^n (\gamma^{m-2}_1,\beta_k)a_kj=0,$ for $2\le j\le n$ implies that $(\gamma^{m-2}_1,\beta_{i_1})a_{i_1j}=0$. Therefore for all $2\le j\le n,$ we have $a_{i_1j}=0.$ Thus $\alpha_{i_1}=(a_{i_11},0,\ldots,0).$ If $i_1=1,$ then $A^m=dI_n$ implies that $a^m_{11}=d$ which is a contradiction. Therefore $i_1>1$. Also $A^m=AA^{m-1}$, shows that for any $j\ne i_1$ we have $(\gamma_{i_1}^{m-1},\beta_j)=\displaystyle\sum_{k=1}^n a_{i_1k}(\gamma^{m-2}_k,\beta_{j})=0.$ Therefore $(\gamma^{m-2}_1,\beta_{j})=0$ for all $j\ne i_1.$ 

\noindent Now $(\gamma^{m-2}_{1},\beta_{i_1})=\displaystyle\sum _{k=1}^n(\gamma^{m-3}_{1},\beta_{k})a_{ki_1}\ne 0$ implies that  $(\gamma^{m-3}_{1},\beta_{i_2})a_{i_2i_1}\ne 0$ for some $1\le i_2\le n,$ that is, $(\gamma^{m-3}_{1},\beta_{i_2})\ne 0 $ and $a_{i_2i_1}\ne 0.$ For any $1 \le j \le n$ with $j \ne i_1$ the condition, $\displaystyle\sum _{k=1}^n(\gamma^{m-3}_{1},\beta_k)a_{kj}=0$ gives $a_{i_2j}=0$ for all $j\ne i_1$. Therefore $\alpha_{i_2}=(0,\ldots,a_{i_2i_1},\ldots,0).$ If $ i_2=i_1,$ then  $ i_2=i_1=1.$ Therefore $a^m_{11}=d$ which is a contradiction. So $i_1>1$ and $i_2\ne i_1.$ Now if $A^2=[b_{ij}]$ then $A^m=A^2A^{m-2}$ shows that for any $j\ne i_2$ we have $(\gamma_{i_2}^{m-1},\beta_j)=\displaystyle\sum_{k=1}^n b_{i_2k}(\gamma^{m-3}_k,\beta_{j})=0.$   Since $a_{i_2i_1}$ and $a_{i_11}$ are non-zero, we see that $(\gamma^{m-3}_1,\beta_{j})=0$ for all $j\ne i_2.$ 

In a similar way, we obtain $1\le i_3, i_4,\ldots,i_{m-1}\le n$ such that $i_j\ne i_k$ for $1\le j,k\le n$ and $\alpha_{i_k}=(0,\ldots,0,a_{i_ki_{k-1}},0\ldots,0) ~ ~ ~ \text{ for all } 3\le k\le m-1$ with $a_{i_ki_{k-1}}\ne0.$ Next, if $1\le j\le n$ then  $\displaystyle\sum_{k_1=1}^n\displaystyle\sum_{k_2=1}^n\cdots\displaystyle\sum_{k_{m-1}=1}^na_{i_{m-1}k_1}a_{k_1k_2}\cdots a_{k_{m-1}j}=\begin{cases}
d, & j=i_{m-1} \\
0, & j\ne i_{m-1}
\end{cases}$. Therefore the condition $a_{i_{m-1}i_{m-2}}a_{i_{m-2}i_{m-3}}\cdots a_{i_11}a_{1j}=0$ implies that $a_{1i_{m-1}}\ne0$ and $a_{1j}=0$ for all $j\ne i_{m-1}.$ Thus $\alpha_1=(0,\ldots,0,a_{1i_{m-1}},0,\ldots,0).$ If we set $b_1=a_{1i_{m-1}}, b_j=a_{i_ji_{j-1}}$ for $2\le j\le m-1$ and $b_m=a_{i_1 1}$, it is obvious that $\displaystyle\prod_{j=1}^nb_j=d.$ Now there exist some elementary matrices $E_1,\ldots,E_r$ such that we obtain a permutation matrix $P=E_1E_2 \cdots E_r$ and without loss of generality we can set a new matrix $B=PAP^{-1}$ as $$B=
	\begin{pmatrix}
		0      & b_1 &0     & \cdots & 0 & 0 & 0 &\cdots & 0\\
		0 & 0      & b_2      & \cdots & 0 & 0 & 0 &\cdots & 0 \\
        \vdots & \vdots & \vdots & \cdots &\vdots& \vdots & \vdots &\cdots & \vdots\\
        0&0&0&\cdots & b_{m-1}& 0 & 0 &\cdots & 0\\
		b_m &0& 0  & \cdots & 0 & 0 & 0 &\cdots & 0\\
        &&&&A_1&&&&
		%a_{{m+1}1} & a_{{m+1}2} & a_{{m+3}3} & \cdots & a_{4m} & a_{{m+1}{m+1}} & a_{{m+1}{m+2}} &\cdots & a_{{m+1}n}\\\vdots & \vdots   & \vdots &  \cdots & \vdots & \vdots & \vdots &\cdots & \vdots \\a_{n1} & a_{n j_0} & a_{n3} & \cdots & a_{nm} & a_{n{m+1}}&a_{n{m+2}}&\cdots& a_{nn}
	\end{pmatrix},$$ where $A_1$ represent the $(n-m)$ rows of $A$ which are different from $\alpha_1,\alpha_{i_1},\ldots\alpha_{i_{m-1}}.$ Hence 
	$B^m = PA^mP^{-1} = d I_n.$ Similarly, there exist
	$m+1 \le j_1<j_2<\ldots<j_{m} \le n$ such that $\displaystyle\prod_{i=1}^mb_{j_i} = d.$
	By induction, for any $1 \le k \le n$, we know that exactly one coefficient of
	the vectors $\alpha_k$ and $\beta_k$ is non-zero, and that there exists a unique
	$1 \le e_1<e_2<\ldots e_{m-1} \le n$ $(k \ne e_i)$ satisfying $\displaystyle\prod_{i=1}^ma_{ke_1}a_{e_1e_2}\cdots a_{e_{m-1}k} = d$.
	This shows that $n$ is multiple of $m$. Thus, there exists a permutation matrix
	$P = E(i_1,i_2) E(i_2,i_3) \cdots E(i_s,i_t)$
	such that $ PAP^{-1}=
\begin{pmatrix}
	A_1 &        &        \\
	& A_2    &        \\
	&        & \ddots \\
	&        &        & A_t
\end{pmatrix}
\text{with }
A_k =
\begin{pmatrix}
	0   & b_{k1} &0 &0&\cdots&0\\
	0&0&b_{k2} &0&\cdots &0\\
    \vdots & \vdots &\vdots&\vdots  & \vdots & \vdots\\
    0&0&0&0&\cdots &b_{k{(m-1)}}\\
	b_{km}&0&0&0 &\cdots & 0
\end{pmatrix},$
where $b_{k1}, b_{k2},\ldots,b_{km}$ are positive integers such that their product is $d$ for all
$1 \le k \le t$.
\end{proof}

\begin{theorem}\label{suman}
Let $R = \mathbb{Z}[\sqrt[m]{d}]$, where $d$ is a positive and $m$-th power free integer. If $M$ is an irreducible $\mathbb{Z}_{+}$-module of $R$, then the following hold:
\begin{enumerate}
\item[$(i)$] $\mathrm{Rank}(M) = m,$
\item[$(ii)$] There exist $b_1,b_2,\ldots,b_m\in \Z_+$ such that $\displaystyle\prod_{i=1}^mb_i=d$ and  $$M ~  \cong ~  \mathbb{Z}\langle \sqrt[m]{b_2b_3^2\cdots b^{m}_{m-1}}, \sqrt[m]{b_3b_4^2\cdots b^{m-2}_{m}b_1^{m-1}}, \ldots ,\sqrt[m]{b_1b_2^2\cdots b_{m-1}^{m-1}} \rangle$$
as  $\mathbb{Z}_{+}$-modules of $R$. 
%\item The number of irreducible $\mathbb Z_+$-modules upto isomorphism  of $R$ is $$\frac{1}{m!}\displaystyle\sum_{\alpha\in T} \binom{m}{\alpha'}\right\displaystyle\prod_{i=1}^{m-1}X_i^{r_i},$$ where $X_i,\alpha,\alpha'$ and $T$ are same as given in Lemma \ref{ns}.
\end{enumerate}
\end{theorem}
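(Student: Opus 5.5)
A $\mathbb{Z}_+$-module $M$ of rank $n$ over $R=\mathbb{Z}[\sqrt[m]{d}]$, with $\mathbb{Z}_+$-basis $1,\sqrt[m]{d},\ldots,\sqrt[m]{d^{m-1}}$, is determined by the single matrix $A\in M_n(\mathbb{Z}_+)$ by which $x=\sqrt[m]{d}$ acts. The basis element $\sqrt[m]{d^k}$ then acts by $A^k$, which is automatically non-negative. Since $x^m=d$ in $R$, we need $A^m=dI_n$. Conversely, any $A\in M_n(\mathbb{Z}_+)$ with $A^m=dI_n$ defines such a module. A $\mathbb{Z}_+$-submodule spanned by a subset $S$ of the basis is exactly a subset closed under $A$, meaning $A_{ij}\neq 0$ and $j\in S$ force $i\in S$ (for the action on columns). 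Hence irreducibility of $M$ means that $A$, viewed as the adjacency matrix of a directed graph on $\{1,\ldots,n\}$, admits no proper non-empty invariant subset of basis vectors.

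For part $(i)$, first treat $n>m$. By Theorem \ref{susu}, there is a permutation matrix $P$ with $PAP^{-1}=\mathrm{diag}(A_1,\ldots,A_t)$ and $t=n/m\ge 2$. Relabeling the basis by $P$ gives an isomorphic $\mathbb{Z}_+$-module by Remark \ref{re1}. The $m$ basis vectors of the first block then span a proper $\mathbb{Z}_+$-submodule, contradicting irreducibility. So $n\le m$.

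Next rule out $n<m$. I would rerun the first part of the argument of Theorem \ref{susu}, which does not use $n>m$. It shows that every row of $A$ has exactly one non-zero entry and that the chain $1\to i_1\to i_{1}\to\cdots$ consists of $m$ distinct indices, by the same contradiction argument. Concretely, if an index repeats, then some proper power $A^k$ with $k<m$ has a non-zero diagonal entry. Since $A$ is monomial, $A^k$ is monomial with a cycle of length $k<m$ dividing $m$. Going around this cycle gives $c^{m/k}=d$ for a positive integer $c$, i.e.\ $d$ is a perfect $(m/k)$-th power, hence not $m$-th power free. This contradicts the hypothesis, so $n\ge m$.

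I expect this step to be the main obstacle. It relies on making precise that $A$ is monomial with all cycles of length exactly $m$. I would prove this cleanly: $A^m=dI$ makes $A$ invertible over $\mathbb{Q}$ with inverse $d^{-1}A^{m-1}\ge 0$, and a non-negative matrix with non-negative inverse is monomial. A monomial matrix is a permutation times a positive diagonal; a cycle of length $k$ contributes the product $c$ of its entries with $c^{m/k}=d$, which forces $k=m$ because $d\geq 2$ is $m$-th power free. This argument is independent of Theorem \ref{susu} and also gives $(i)$ directly: irreducibility forces a single cycle, so $n=m$.

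For $(ii)$, take $n=m$ with a single $m$-cycle. After a permutation, $A$ has exactly the form of $A_k$ in Theorem \ref{susu}, with entries $b_1,\ldots,b_m$ satisfying $\prod b_i=d$. It remains to realize this matrix by the listed elements. Set $\omega_1=\sqrt[m]{b_2b_3^2\cdots b_{m-1}^{m-2}b_m^{m-1}}$ (interpreting the stated exponents cyclically) and define the remaining $\omega_j$ by cyclically shifting indices. Then a direct computation gives $\sqrt[m]{d}\,\omega_j=b_{j}\,\omega_{j+1}$ up to the indexing convention, because multiplying by $\sqrt[m]{d}=\sqrt[m]{b_1\cdots b_m}$ raises each exponent by one, and any exponent reaching $m$ is pulled out as the factor $b_j$. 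These elements are $\mathbb{Q}$-linearly independent, so the $\mathbb{Z}$-span is a $\mathbb{Z}_+$-module whose action matrix is the cyclic matrix. By Remark \ref{re1} it is isomorphic to $M$. The remaining bookkeeping is to match the exponent pattern in the statement to the index convention of $A_k$.
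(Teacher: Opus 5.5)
The step you flag as the main obstacle has a genuine gap. From a $k$-cycle of the monomial matrix $A$ with entry product $c$ you correctly get $k\mid m$ and $c^{m/k}=d$. But the inference ``$d$ is a perfect $(m/k)$-th power, hence not $m$-th power free'' is false whenever $1<m/k<m$; $m$-th power freeness only rules out $k=1$. For example, $d=4$ is $4$-th power free, yet for $m=4$ the matrix $A=\begin{pmatrix}0&1\\2&0\end{pmatrix}$ satisfies $A^4=4I_2$. Neither basis vector spans an $A$-stable subgroup, so $A$ gives an irreducible $\mathbb{Z}_+$-module of rank $2\neq m$. Under the hypothesis as literally stated, $(i)$ is therefore false, and no better argument can close this step. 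The paper's proof has the same hole. It rests on Theorem \ref{susu}, whose proof only ever reaches the contradiction $a_{11}^m=d$ (a cycle of length one) and asserts the distinctness of $1,i_1,\dots,i_{m-1}$ ``in a similar way''. Indeed Theorem \ref{susu} already fails for $m=d=4$, $n=6$, taking $A$ to be the direct sum of the block above and a $4$-cycle whose entries multiply to $4$.

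The missing hypothesis is already implicit in your first sentence. The elements $1,\sqrt[m]{d},\dots,(\sqrt[m]{d})^{m-1}$ form a $\mathbb{Z}$-basis of $R$, with $A^m=dI_n$ the only relation a representation must satisfy, precisely when $x^m-d$ is irreducible over $\mathbb{Q}$. For $d>0$ this means $d$ is not a $p$-th power for any prime $p\mid m$; in the example, $\sqrt[4]{4}=\sqrt{2}$ and $R=\mathbb{Z}[\sqrt{2}]$ has rank $2$. State this hypothesis explicitly and use it: if $c^{m/k}=d$ with $m/k>1$, pick a prime $p\mid m/k$; then $d=(c^{m/(kp)})^{p}$, a contradiction.

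With that change your argument is complete, and it is a better route than the paper's. Your observation that $A\ge 0$ and $A^{-1}=d^{-1}A^{m-1}\ge 0$ force $A$ to be monomial gives the cycle decomposition for every $n$ at once. The paper only invokes Theorem \ref{susu}, which is stated for $n>m$, and never treats $n\le m$. Your preliminary appeal to Theorem \ref{susu} for $n>m$ can then simply be dropped.
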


\begin{proof}
	Let $\{m_1,\ldots,m_n\}$ be a $\Z_+$ basis of $M$. Let
	$A = [a_{ij}]_{n\times n}$ be the non-negative integer matrix determined by $\sqrt[m]{d}$. 
	Then it follows from Theorem \ref{susu} that there exists a permutation matrix $P$
	such that $PAP^{-1} =\mathrm{diag}(A_1, A_2, \ldots, A_n)$, where for $1\le k\le t,$
	%\begin{pmatrix}A_1 &        &        \\& A_2    &        \\&        & \ddots \\&        &        & A_t\end{pmatrix}, ~  ~ 
	$A_k =
		\begin{pmatrix}
	0   & b_{k1} &0 &0&\cdots&0\\
	0&0&b_{k2} &0&\cdots &0\\
    \vdots & \vdots &\vdots&\vdots  & \vdots & \vdots\\
    0&0&0&0&\cdots &b_{k{(m-1)}}\\
	b_{km}&0&0&0 &\cdots & 0
\end{pmatrix},$ with
	 $b_{k1},b_{k2}, \ldots, b_{km}$ are positive integers such that
	$b_{k1}b_{k2}\cdots b_{km} = d.$  Then $M$ can be decomposed
as a direct sum of $\Z_+$-submodules of rank $m$ by Remark \ref{re1}. However, $M$ is irreducible, so $\mathrm{Rank}(M)=m$.  Therefore       $
	PAP^{-1}  =
\begin{pmatrix}
	0   & b_{1} &0 &0&\cdots&0\\
	0&0&b_{2} &0&\cdots &0\\
    \vdots & \vdots &\vdots&\vdots  & \vdots & \vdots\\
    0&0&0&0&\cdots &b_{m-1}\\
	b_{m}&0&0&0 &\cdots & 0
\end{pmatrix}, $
	where $b_1,\ldots,b_{m-1}$ and $b_m$ are positive integers such that  $\displaystyle\prod_{i=1}^nb_i = d$. Let $\mu_1=\sqrt[m]{b_2b_3^2\cdots b^{m-1}_{m}}, ~ \mu_2=\sqrt[m]{b_3b_4^2\cdots b^{m-2}_{m}b_1^{m-1}}$  and   we let $$\mu_i=\sqrt[m]{b_{i+1}b_{i+2}^2\cdots b^{m-i}_{m}b^{m-i+1}_1b_2^{m-i+2}\cdots b^{m-1}_{i-1}}, ~ ~ ~ ~ ~ ~ \text{ for } 3\le i\le m.$$ One can easily note that $$\sqrt[m]{d}\mu_1=b_{m}\mu_m, \sqrt[m]{d}\mu_2=b_{1}\mu_1 \text{ and } \sqrt[m]{d}\mu_i=b_{i-1}\mu_i \text{ for all } 3\le i\le m.$$
	Hence by Definition \ref{D2}, $M ~  \cong ~  \mathbb{Z}\langle \sqrt[m]{b_2b_3^2\cdots b^{m-1}_{m}}, \sqrt[m]{b_3b_4^2\cdots b^{m-2}_{m-1}b_1^{m-1}}, \ldots ,\sqrt[m]{b_1b_2^2\cdots b_{m-1}^{m-1}} \rangle.$ 
    %Thus, by corollary \ref{nr}, the ring $\mathbb{Z}[\sqrt[m]{d}]$ has exactly $\frac{1}{m!}\displaystyle\sum_{\alpha\in T} \binom{m}{\alpha'}\right\displaystyle\prod_{i=1}^{m-1}X_i^{r_i}$ irreducible $\mathbb{Z}_{+}$-modules up to isomorphism.
\end{proof}
The following corollary deals with the counting of the irreducible $\Z_+$ modules over the ring $\Z[\sqrt[m]{d}],$ where  $d$ is a square-free positive integer. 
\begin{corollary}\label{aux}
Let $m\ge 2$ be a positive integer and $d=\displaystyle\prod_{i=1}^{n}p_i$ be a square-free positive integer. If $M$ is an irreducible $\mathbb{Z}_{+}$-module of $\mathbb{Z}[\sqrt[m]{d}]$, then the following hold:
\begin{enumerate}
\item[$(i)$] $\mathrm{Rank}(M) = m,$
\item[$(ii)$] There exist $b_1,b_2,\ldots,b_m\in \Z_+$ such that $\displaystyle\prod_{i=1}^mb_i=d$ and  $$M ~  \cong ~  \mathbb{Z}\langle \sqrt[m]{b_2b_3^2\cdots b^{m-1}_{m}}, \sqrt[m]{b_3b_4^2\cdots b^{m-2}_{m-1}b_1^{m-1}}, \ldots ,\sqrt[m]{b_1b_2^2\cdots b_{m-1}^{m-1}} \rangle$$
as  $\mathbb{Z}_{+}$-modules of $ \mathbb{Z}[\sqrt[m]{d}]$. 
\end{enumerate}
\end{corollary}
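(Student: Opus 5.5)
This corollary is the special case of Theorem \ref{suman} in which $d$ is square-free. The plan is to verify the hypotheses of that theorem and then read off both conclusions.

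\textbf{Step 1 (hypotheses).} Since $d=\prod_{i=1}^n p_i$ is square-free, every prime divides $d$ at most once. As $m\ge 2$, no prime power $p^m$ with $p$ prime divides $d$, so $d$ is $m$-th power free. Hence $R=\mathbb{Z}[\sqrt[m]{d}]$ falls under Theorem \ref{suman}.

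\textbf{Step 2 (the action matrix).} Let $M$ be irreducible with $\mathbb{Z}_+$-basis $\{m_1,\ldots,m_n\}$, and let $A$ be the non-negative integer matrix by which $\sqrt[m]{d}$ acts. The relation $(\sqrt[m]{d})^m=d$ gives $A^m=dI$. Theorem \ref{susu} then block-diagonalizes $A$, up to a permutation, into cyclic weighted blocks $A_k$ of size $m$. Each block spans a $\mathbb{Z}_+$-submodule, so irreducibility forces a single block. This gives $\mathrm{Rank}(M)=m$, which is conclusion $(i)$.

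\textbf{Step 3 (explicit model).} The surviving block has weights $b_1,\ldots,b_m$ with $\prod_{i=1}^m b_i=d$. Define the generators $\mu_i$ as in the proof of Theorem \ref{suman}; multiplication by $\sqrt[m]{d}$ permutes them cyclically with weights $b_j$. Matching this with the cyclic block and applying Remark \ref{re1} yields the isomorphism in $(ii)$.

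\textbf{Extra information from square-freeness.} Because $d$ is square-free, the $b_i$ are pairwise coprime. So the tuple $(b_1,\ldots,b_m)$ amounts to a map assigning each $p_i$ to one of $m$ positions. This is not needed for the statement itself, but it is useful for the subsequent counting.

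\textbf{Main obstacle.} The only substantive step is Theorem \ref{susu}, which is assumed. What remains to check is that the hypothesis $n>m$ there does not exclude the case $\mathrm{rank}(M)=m$. In that case $A$ is itself an $m\times m$ matrix with $A^m=dI$, and I would rerun the first stage of that proof to see that $A$ is already a single cyclic block. This case also needs $d\ge 2$: a square-free $d>1$ is not an $m$-th power, which rules out a nonzero diagonal entry. The rank-below-$m$ case is excluded by the same row-chasing argument, which produces $m$ distinct indices.
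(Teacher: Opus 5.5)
Your proposal is correct and matches the paper. The paper gives no separate proof of this corollary and simply reads it off Theorem \ref{suman}, using that a square-free integer is $m$-th power free when $m\ge 2$. Your extra check of the cases $\mathrm{rank}(M)\le m$ is a real improvement, since the paper appeals to Theorem \ref{susu}, which is stated only for $n>m$, and silently skips these cases. When you rerun the row-chasing, though, note that a chain closing up after $\ell<m$ steps forces $\ell\mid m$ and $d=c^{m/\ell}$, where $c$ is the product of its weights. So you need $d$ not to be a perfect $k$-th power for every $k>1$ dividing $m$, not only for $k=m$. This is exactly what square-free $d\ge 2$ guarantees, and it is where square-freeness does more than $m$-th power freeness.
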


\begin{theorem}\label{su}
Let $m$ be a positive integer. Assume that $d_{1}, d_{2}, \ldots, d_{r}$ are pairwise relatively prime, positive and $m$-th power free integers
with $d_{j} \ge 2$ for all $1 \le j \le r$. Let
$R = \mathbb{Z}[\sqrt[m]{d_{1}},\sqrt[m]{d_{2}}, \ldots, \sqrt[m]{d_{r}}],$
and let $M$ be an irreducible $\mathbb{Z}_{+}$-module of $R$. Then the following hold. \begin{enumerate}
\item[$(i)$] $\mathrm{Rank}(M) = m^{r}$.
\item[$(ii)$] For any $1 \le j \le r$, $M$ is isomorphic, as a $\mathbb{Z}_{+}$-module of
$\mathbb{Z}[\sqrt[m]{d_{j}}]$, to a direct sum
$M \cong \bigoplus_{i=1}^{m^{r-1}} M_{i}.$ Moreover, there exist positive integers $b_{i1}, b_{i2},\ldots, b_{im}$ such that
$b_{i1} b_{i2}\cdots b_{im} = d_{i}$ and for all $1 \le i \le m^{r-1}$, we have
$$M_{i} \cong \mathbb{Z}\langle \sqrt[m]{b_{i2}b_{i3}^2\cdots b_{im}^{m-1}}, \sqrt[m]{b_{i3}b_{i4}^2\cdots b^{m-2}_{im}b^{m-1}_{i1}}, \ldots ,\sqrt[m]{b_{i1}b_{i2}^2\cdots b_{i(m-1)}^{m-1}} \rangle.$$ 
\end{enumerate}\end{theorem}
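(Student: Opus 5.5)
We argue by induction on $r$, the case $r=1$ being Theorem \ref{suman}. Let $\{\omega_1,\ldots,\omega_n\}$ be a $\mathbb Z_+$-basis of $M$, and let $A_j\in M_n(\mathbb Z_+)$ be the matrix of multiplication by $\sqrt[m]{d_j}$. Since $R$ is commutative and $(\sqrt[m]{d_j})^m=d_j$, these matrices satisfy
\[
A_j^m=d_jI_n \quad\text{and}\quad A_iA_j=A_jA_i \qquad (1\le i,j\le r).
\]

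\textbf{Block form for one generator.} Restricting $M$ to $\mathbb Z[\sqrt[m]{d_j}]$ and applying Theorem \ref{susu} (or the trivial case $n=m$) gives $n=mt$ and a permutation matrix $P$ with $PA_jP^{-1}=\mathrm{diag}(C_1,\ldots,C_t)$. Each $C_k$ is a cyclic weighted matrix as in Lemma \ref{lemma su}, with entry product $d_j$. Every block spans a $\mathbb Z_+$-submodule of $\mathbb Z[\sqrt[m]{d_j}]$ of rank $m$, and Theorem \ref{suman} identifies it with $\mathbb Z\langle\cdots\rangle$ as displayed in (ii). This gives the decomposition $M\cong\bigoplus_{i=1}^{t}M_i$ of (ii), once we show $t=m^{r-1}$.

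\textbf{Block structure of the other generators.} Fix $j$, and for $l\neq j$ write $PA_lP^{-1}=(X_{kk'})$ in $m\times m$ blocks. Commutativity gives $C_kX_{kk'}=X_{kk'}C_{k'}$. The entries of $A_l$ should divide $d_l$: since $A_l^m=d_lI$, any nonzero entry $a$ of $A_l$ lies on a closed walk whose weight product is $d_l$, so $a\mid d_l$. Also $\gcd(d_l,d_j)=1$. Hence Lemma \ref{lemma su} applies. In the degenerate case where all weights of $C_k$ are equal, we must separately check that nonzero entries of $X$ force matching weights, using the same coprimality argument. The conclusion is that $X_{kk'}=0$ unless $C_k=C_{k'}$, and $X_{kk'}$ is scalar when $C_k=C_{k'}$.

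Grouping together blocks with equal $C_k$, say $C_k=C$ for $k$ in a class $S$, we get $A_l|_S=Y\otimes I_m$ with $Y\in M_{|S|}(\mathbb Z_+)$. Each class is then a $\mathbb Z_+$-submodule for all generators, so irreducibility forces a single class, i.e. all $C_k$ equal. Then $PA_lP^{-1}=Y_l\otimes I_m$ for $l\neq j$. The matrices $Y_l$ pairwise commute, satisfy $Y_l^m=d_lI_t$, and have entries dividing $d_l$, so they define a $\mathbb Z_+$-module $N$ of rank $t$ over $\mathbb Z[\sqrt[m]{d_l}:l\neq j]$. A $\mathbb Z_+$-submodule of $N$ spanned by a set $T$ of basis vectors yields the $\mathbb Z_+$-submodule of $M$ spanned by the $m|T|$ basis vectors in the blocks indexed by $T$. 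Hence $N$ is irreducible, and by induction $t=m^{r-1}$, so $\mathrm{Rank}(M)=m\cdot m^{r-1}=m^r$.

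\textbf{Main obstacle.} The delicate step is invoking Lemma \ref{lemma su}. Its hypotheses demand nonequal weights and entries dividing $d_l$. When all weights $b_{k1}=\cdots=b_{km}=b$, we have $b^m=d_j$ with $d_j$ $m$-th power free, which forces $b=1$ and so contradicts $d_j\ge2$; the degenerate case therefore cannot occur. The divisibility of entries we secure by the closed-walk argument above, which must be carried out carefully.
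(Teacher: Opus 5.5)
There is a genuine gap in the block-structure step. You take the conclusion of Lemma~\ref{lemma su}(1) at face value, and that conclusion is false. Theorem~\ref{susu} determines each block $C_k$ only up to a cyclic rotation, i.e.\ the choice of starting vertex of its $m$-cycle, and rotated blocks can be joined by a nonzero intertwiner.

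Take $M=R=\mathbb{Z}[\sqrt{2},\sqrt{3}]$ with $\mathbb{Z}_+$-basis ordered $(1,\sqrt{2},\sqrt{6},\sqrt{3})$. Then $\sqrt{2}$ acts by $\mathrm{diag}(C_1,C_2)$, where $C_1=\begin{pmatrix}0&1\\2&0\end{pmatrix}$ and $C_2=\begin{pmatrix}0&2\\1&0\end{pmatrix}$ differ. This is a legitimate output of Theorem~\ref{susu}. Yet $\sqrt{3}$ has zero diagonal blocks and off-diagonal blocks $X_{12}=\begin{pmatrix}0&1\\1&0\end{pmatrix}$, $X_{21}=3X_{12}$, and indeed $C_1X_{12}=X_{12}C_2=\mathrm{diag}(1,2)$. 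So ``$X_{kk'}=0$ unless $C_k=C_{k'}$'' fails, and your grouping step would declare each of the two blocks a $\mathbb{Z}_+$-submodule of an irreducible module.

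Let $w^{(k)}_a$ be the entry of $C_k$ in position $(a,a+1)$, indices mod $m$. What coprimality actually gives is that a nonzero entry $x_{a,c}$ of $X_{kk'}$ forces $x_{a+1,c+1}=x_{a,c}$ and $w^{(k)}_a=w^{(k')}_c$. Hence $X_{kk'}=\sum_s\lambda_sJ^s$, where $J$ is the cyclic shift and $s$ runs over the shifts with $w^{(k)}_a=w^{(k')}_{a+s}$ for all $a$.

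To repair this, first conjugate each block by a power of $J$. This is a permutation similarity keeping $A_j$ block diagonal, and it makes rotation-equivalent blocks literally equal. Then $X_{kk'}=0$ between distinct classes, and when $C_k=C_{k'}$ the admissible $s$ are exactly the rotational symmetries of the weight sequence.

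So what makes $X_{kk'}$ scalar is ``no proper period'', not ``not all weights equal''. For composite $m$, a sequence such as $(1,2,1,2)$ is nonconstant but $J^2$-invariant. A period $p<m$ makes $d_j$ a perfect $(m/p)$-th power. That is impossible for prime $m$, for squarefree $d_j$, or whenever $x^m-d_j$ is irreducible. Irreducibility is in any case necessary for $R$ to have rank $m^r$ and for Theorem~\ref{susu} to hold. It does not follow from $m$-th power freeness alone: take $d_j=4$, $m=4$.

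Also, get the divisibility of the entries of $A_l$ from Theorem~\ref{susu} applied to $A_l$, which is monomial with cycle products $d_l$. Your closed-walk phrasing alone gives only $a\le d_l$, since $(A_l^m)_{pp}=d_l$ is a sum over all closed walks. With these corrections the argument is sound.

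The paper's proof invokes Lemma~\ref{lemma su} in exactly the same way, so this flaw is inherited rather than a departure from it. After that point your route is genuinely different and cleaner. You pass to the multiplicity module $N$ carried by the $Y_l$, an irreducible $\mathbb{Z}_+$-module over the ring on the remaining $r-1$ generators. Induction then gives $t=m^{r-1}$ at once, with (ii) as a by-product.

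The paper instead restricts $M$ to $\mathbb{Z}[\sqrt[m]{d_1},\ldots,\sqrt[m]{d_k}]$ to get $n=m^kq$. It rules out $q<m$ by a pigeonhole argument producing two commuting matrices with the same support and $h_1/h_2=(u_1/v_1)^m$. It gets $q\le m$ from the fact that the orbit of one basis vector under the monomial basis of $R$ spans a $\mathbb{Z}_+$-submodule of rank at most $m^{k+1}$.
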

\begin{proof}
 For any $1 \le j \le r$, choose a
$\mathbb{Z}_{+}$-basis of $M$ such that the non-negative integer matrix
determined by $\sqrt[m]{d_j}$ is of the form
$B=
\begin{pmatrix}
B_1 &        &        \\
    & B_2    &        \\
    &        & \ddots \\
    &        &        & B_t
\end{pmatrix},$ where
$B_k=
\begin{pmatrix}
	0   & b_{k1} &0 &0&\cdots&0\\
	0&0&b_{k2} &0&\cdots &0\\
    \vdots & \vdots &\vdots&\vdots  & \vdots & \vdots\\
    0&0&0&0&\cdots &b_{k(m-1)}\\
	b_{km}&0&0&0 &\cdots & 0
\end{pmatrix} $ and $b_{k1}b_{k2}\cdots b_{km}=d_j$ for all $1 \le k \le t$.
Moreover, without loss of generality, we may further assume that each $B_i$ matrix occurring $e_i$ times with $e_1+e_2+\cdots+e_s=t$. Let $A$ be the non-negative integer matrix determined by
$\sqrt[m]{d_e}$ for some $1 \le e \neq j \le r$.
Since the ring $R$ is commutative, we have
$\sqrt[m]{d_j}\sqrt[m]{d_e}=\sqrt[m]{d_e}\sqrt[m]{d_j},$
which implies $AB=BA.$ By Lemma \ref{su}, there exist matrices $A_1,\ldots,A_s$ such that $A=
\begin{pmatrix}
A_1 &        &        \\
    & A_2    &        \\
    &        & \ddots \\
    &        &        & A_s
\end{pmatrix}.$ Note that the matrices determined by $\sqrt[m]{d_k}$,
$1 \le k \neq j \le r$, all have the same shape as $A$.
Hence $s=1$; otherwise, $M$ would be a direct sum of non-trivial
$\mathbb{Z}_{+}$-submodules, which contradicts the irreducibility of $M$.
So as a $\mathbb{Z}_{+}$-module of
$\mathbb{Z}[\sqrt[m]{d_j}]$, the module $M$ is isomorphic to the direct sum of modules
$\mathbb{Z}\langle \sqrt[m]{b_{j2}b_{j3}^2\cdots b_{jm}^{m-1}}, \sqrt[m]{b_{j3}b_{j4}^2\cdots b^{m-2}_{jm}b^{m-1}_{j1}}, \ldots ,\sqrt[m]{b_{j1}b_{j2}^2\cdots b_{j(m-1)}^{m-1}} \rangle$,
where $b_{j1}\cdots b_{jm}=d_j$.

Now we prove $(1)$ by induction on $r$.
If $r=1$, then this holds by Theorem \ref{suman}. Assume that the result holds when $r=k$. If $r=k+1$, assume that $\{u_1,\ldots,u_n\}$ is a $\mathbb{Z}_{+}$-basis of $M$.
Then, as a $\mathbb{Z}_{+}$-module of
$R_1=\mathbb{Z}[\sqrt[m]{d_1},\ldots,\sqrt[m]{d_k}]$,
$M$ is a direct sum of irreducible $R_1$-submodules.
Hence, by induction, $n=m^k q$ for some positive integer $q$. Clearly from the above conclusion, the
matrix determined by $\sqrt[m]{d_1}$ has the form
$B=
\begin{pmatrix}
B_1 &        &        &        \\
    & B_1    &        &        \\
    &        & \ddots &        \\
    &        &        & B_1
\end{pmatrix}, \text{ where }
B_1=
\begin{pmatrix}
	0   & b_{1} &0 &0&\cdots&0\\
	0&0&b_{2} &0&\cdots &0\\
    \vdots & \vdots &\vdots&\vdots  & \vdots & \vdots\\
    0&0&0&0&\cdots &b_{m-1}\\
	b_{m}&0&0&0 &\cdots & 0
\end{pmatrix}$ and $b_1\ldots b_m$ are positive integers such that $b_1b_2\cdots b_m=d_1$ and not all equal. 
	Here $n=mt$. Let $\sqrt{h}\in R$ be such that $h$ is coprime to $d_1$. Suppose the matrix determined by $\sqrt[m]{h}$ is $A= \begin{pmatrix}
	A_{11} & \cdots & A_{1t}\\
	A_{21} & \cdots & A_{2t}\\
	\vdots & \cdots  & \vdots \\
    A_{t1} & \cdots & A_{tt}\\
	\end{pmatrix}$ where $A_{ij}$ is a matrix of order $m$ and $n=mt$. Since $R$ is commutative, we have $AB=BA$. By Lemma~\ref{lemma su}, we see that each $A_{ij}$ is a scalar matrix. Using the fact $A^m=dI_n,$ we observe that $A_{ij}=0$ if $i=j$ and for each $1\le i\le t$ there is exactly one $1\le j\le t$  such that $A_{ij}\ne 0.$

If $q\in \{1,2,\ldots,m-1\}$, then $n=m^kq$. Note that the number of positive integers $h$ is $m^k$ which are $m$-the power free and $\gcd(h,d_1)=1$. But $\mathrm{Rank}(R)=m^{k+1}$, so there exist at least two distinct  matrices $X$ and $Y$ which are determined by $\sqrt[m]{h_1}$ and $\sqrt[m]{h_2}$ respectively in which the non-zero entries exist at the same positions. Suppose the non-zero entries in $X$ and $Y$ are, respectively, $u_1,u_2,\ldots u_m$ and $v_1,v_2,\ldots,v_m$ \begin{comment}Let
\[ X =
\begin{pmatrix}
0   & *   & \cdots & v_1 E_3 & \cdots  & * \\
*   & 0   & \cdots & *      &\vdots & * \\
\vdots & \vdots & \ddots & \vdots  & \vdots \\
*   & 0   & \cdots & 0 & \cdots & v_2E_3 \\
\vdots & \vdots & \ddots & \vdots &\cdots & \vdots  \\
v_3 E_3 & * & \cdots & * &*& * \\
*   & *   & \cdots & * & *&0
\end{pmatrix},
 Y =
\begin{pmatrix}
0   & *   & \cdots & w_1 E_3 & \cdots  & * \\
*   & 0   & \cdots & *      &\vdots & * \\
\vdots & \vdots & \ddots & \vdots  & \vdots \\
*   & 0   & \cdots & 0 & \cdots & w_2E_3 \\
\vdots & \vdots & \ddots & \vdots &\cdots & \vdots  \\
w_3 E_3 & * & \cdots & * &*& * \\
*   & *   & \cdots & * & *&0
\end{pmatrix}\]where $h_1\ne h_2, u_1u_2u_3=h_1, v_1v_2v_3=h_2$.\end{comment} 
But then $u_1u_2\cdots u_m=h_1$ and $v_1v_2\cdots v_m=h_2$. Therefore the equality $XY=YX$ provides that $$\frac{u_1}{v_1}=\frac{u_2}{v_2}=\cdots=\frac{u_m}{v_m}.$$ 
So we have $\frac{h_1}{h_2}=\left(\frac{u_1}{v_1}\right)^m;$ which is a contradiction. Therefore, the assumption $q\in\{1,2,\ldots m-1\}$ is not possible.

Note that by Theorem~\ref{susu}, for any product
$d_{j_1}\cdots d_{j_s}$ with $1\le s\le k+1$, the matrix determined by
$\sqrt[m]{d_{j_1}\cdots d_{j_s}}$ has exactly one non-zero element in each
row and each column.
Consequently, the $\mathbb{Z}_{+}$-submodule $(m_1)$ generated by any basis
element $m_1$ has rank at most $m^{k+1}$. So $q\leq m.$
\end{proof}

\begin{corollary}
    Let $R = \mathbb{Z}[\sqrt[m]{p_{1}}, \sqrt[m]{p_{2}}, \ldots, \sqrt[m]{p_{r}}],$ where $p_i$'$s$ are distinct primes. 
Let $M$ be an irreducible $\mathbb{Z}_{+}$-module of $R$. Then $M\cong R$.
\end{corollary}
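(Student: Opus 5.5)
The plan is to combine Theorem \ref{su} with the primality of the $p_j$, and then build an explicit basis-to-basis isomorphism onto the regular module. Write $x_j=\sqrt[m]{p_j}$. Then $R$ has $\mathbb Z_+$-basis the monomials $x^{a}=x_1^{a_1}\cdots x_r^{a_r}$ with $a\in\{0,\ldots,m-1\}^r$. In this basis $x_j x^a=x^{a+e_j}$ if $a_j<m-1$, and $x_jx^a=p_j\,x^{a-(m-1)e_j}$ if $a_j=m-1$.

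\emph{Step 1: shape of the generator matrices.} Let $M$ be irreducible with $\mathbb Z_+$-basis $\{w_1,\ldots,w_n\}$. By Theorem \ref{su}, $n=m^r$. Let $X_j$ be the matrix of $x_j$ on $M$; it satisfies $X_j^m=p_jI_n$. By Theorem \ref{susu}, $X_j$ has exactly one nonzero entry in each row and column. Its cycles have length $m$ and their weights multiply to $p_j$. Since $p_j$ is prime, each $x_j$-cycle carries exactly one arrow of weight $p_j$, and all its other arrows have weight $1$.

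For a basis vector $v$, let $\phi_j(v)\in\mathbb Z/m$ be the number of $x_j$-steps from $v$ needed to reach the weight-$p_j$ arrow in its $x_j$-cycle.

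\emph{Step 2: commutativity fixes the phases.} For $i\ne j$, the matrices are monomial and satisfy $X_iX_j=X_jX_i$. Comparing weights around the square $v\to x_iv\to x_jx_iv$ and $v\to x_jv\to x_ix_jv$ gives
\[
\mathrm{wt}_i(v)\,\mathrm{wt}_j(x_iv)=\mathrm{wt}_j(v)\,\mathrm{wt}_i(x_jv).
\]
Each $\mathrm{wt}_i$ lies in $\{1,p_i\}$, each $\mathrm{wt}_j$ lies in $\{1,p_j\}$, and $p_i\ne p_j$ are primes. Hence $\mathrm{wt}_j(x_iv)=\mathrm{wt}_j(v)$. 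It follows that applying $x_i$ does not change $\phi_j$, while applying $x_j$ lowers $\phi_j$ by $1$.

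Choose a starting vector $v_0$ and apply suitable powers of each $x_j$ (the coefficients only rescale, and we pass to the underlying basis vector). This produces a basis vector $w$ with $\phi_j(w)=m-1$ for every $j$. Equivalently, for every $j$ the weight-$p_j$ arrow of the $x_j$-cycle through $w$ is the step from position $m-1$ back to position $0$.

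\emph{Step 3: the isomorphism.} By Step 2, for $a\in\{0,\ldots,m-1\}^r$ the element $x^aw$ is reached using only weight-$1$ arrows. So $x^aw=w_{\sigma(a)}$ for some basis vector $w_{\sigma(a)}$, with coefficient exactly $1$.

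The relations $x_jw_{\sigma(a)}=w_{\sigma(a+e_j)}$ and $x_jw_{\sigma(a)}=p_jw_{\sigma(a-(m-1)e_j)}$ then hold, exactly as in $R$. Therefore the span of $\{w_{\sigma(a)}\}$ is a nonzero $\mathbb Z_+$-submodule of $M$. By irreducibility it is all of $M$. Since $n=m^r$, the map $\sigma$ is a bijection onto the basis.

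The $R$-linear map $M\to R$ sending $w_{\sigma(a)}\mapsto x^a$ is then an isomorphism of $\mathbb Z_+$-modules in the sense of Remark \ref{re1}.

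The main obstacle is Step 2. One must check that the phases can be normalized simultaneously for all $j$, i.e.\ that moving along $x_i$ never disturbs the $p_j$-arrow positions. The plan rests on the weight-comparison identity for commuting monomial matrices, which is the analogue of the argument in Lemma \ref{lemma su}. It uses essentially that the $p_j$ are distinct primes, so each weight is $1$ or a single prime.
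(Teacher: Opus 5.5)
Your argument is correct, and it is more complete than what the paper offers. The paper states this corollary without proof. It intends the corollary to follow from Theorem \ref{su}: for a prime $p_j$, every factorization $b_1\cdots b_m=p_j$ is a rotation of $(p_j,1,\ldots,1)$, so each rank-$m$ summand in Theorem \ref{su}(ii) is a copy of the regular $\mathbb{Z}[\sqrt[m]{p_j}]$-module.

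That route only describes $M$ one generator at a time. It never explains how the identifications for different $j$ combine into a single basis bijection $M\to R$ that intertwines all the $x_j$ at once. The nearest ingredient in the paper is the block-scalar step in the proof of Theorem \ref{su}, via Lemma \ref{lemma su}.

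Your phase functions $\phi_j$ supply exactly this missing gluing. The weight identity for commuting monomial matrices, together with $p_i\neq p_j$, shows that $x_i$ preserves $\phi_j$. So you can move to a vector $w$ whose phases all equal $m-1$, and $w$ generates a copy of $R$. Primality is used precisely where it is needed: the pattern $(p_j,1,\ldots,1)$ has no nontrivial cyclic symmetry. By contrast, a pattern such as $(2,1,2,1)$ for $m=4$ admits non-scalar commuting matrices.

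Two of your citations can be dropped.
\begin{enumerate}
\item You do not need Theorem \ref{su}. Since $\phi_j(x^aw)=m-1-a_j$, the vectors $x^aw$ with $a\in\{0,\ldots,m-1\}^r$ are pairwise distinct. So $\sigma$ is injective, irreducibility makes it surjective, and $n=m^r$ follows directly.
\item The monomial shape of $X_j$ follows because $X_j$ and $X_j^{-1}=p_j^{-1}X_j^{m-1}$ are both nonnegative. This avoids relying on Theorem \ref{susu}, whose hypothesis $n>m$ fails when $r=1$.
\end{enumerate}
With these changes your proof is self-contained. The only outside input is the classical fact (Besicovitch) that the monomials $x^a$ form a $\mathbb{Z}$-basis of $R$.
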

%\begin{example}Let $R = \mathbb{Z}[\sqrt[3]{4}, \sqrt[3]{3}, \sqrt[3]{35}],$ and let M be an irreducible $\mathbb Z_+$-module of $R$. Then by Theorems \ref{su} and \ref{SUSUU},   $\operatorname{rank}(M)=27$ and up to isomorphism, there are exactly $4$ irreducible $\mathbb{Z}_{+}$-module of $R$.\end{example}


\begin{thebibliography}{99}


\bibitem{CHI1}
Z.~Chen and R.~Zhao,
Classification of irreducible $\mathbb{Z}_{+}$-modules of a $\mathbb{Z}_{+}$-ring using matrix equations,
Symmetry \textbf{14} (2022), 2598.

\bibitem{CHI2}
Y.~Chen, L.~Li, and Y.~Zheng,
Irreducible $\mathbb{Z}_{+}$-modules over a class of domains,
Commun. Algebra \textbf{53} (2025), no.~1, 11--17.

\bibitem{NO}
P.~Etingof, D.~Nikshych, and V.~Ostrik,
On fusion categories,
Ann. of Math. (2) \textbf{162} (2005), no.~2, 581--642.

\bibitem{ETIN}
P.~Etingof and M.~Khovanov,
Representations of tensor categories and Dynkin diagrams,
Internet Math. Res. Not. \textbf{5} (1995), 235--247.

\bibitem{Lusztig1987}
G.~Lusztig,
Leading coefficients of character values of Hecke algebras,
Proc. Sympos. Pure Math. \textbf{47} (1987), 235--262.

\bibitem{OST1}
V.~Ostrik,
Module categories, weak Hopf algebras and modular invariants,
Transform. Groups \textbf{8} (2003), 177--206.

\bibitem{CHI}
C.~Yuan, R.~Zhao, and L.~Li,
Irreducible $\mathbb{Z}_{+}$-modules of near-group fusion ring $K(\mathbb{Z}_3,3)$,
Front. Math. China \textbf{13} (2018), 947--966.

\end{thebibliography}
\end{document}